\newtheorem{lemma}{Lemma}
\begin{document}

	\title{Efficient parallel inversion of ParaOpt preconditioners}
	\author[1]{Corentin Bonte}
	\author[1]{Arne Bouillon\corref{cor1}}\ead{arne.bouillon@kuleuven.be}
	\author[1]{Giovanni Samaey}
	\author[1]{Karl Meerbergen}

	\cortext[cor1]{Corresponding author}
	\affiliation[1]{organization={Department of Computer Science, KU Leuven},
		addressline={Celestijnenlaan 200a -- bus 2402},
		postcode={3001},
		city={Leuven},
		country={Belgium}}

\begin{abstract}
    Recently, the ParaOpt algorithm was proposed as an extension of the time-parallel Parareal method to optimal control. ParaOpt uses quasi-Newton steps that each require solving a system of matching conditions iteratively. The state-of-the-art parallel preconditioner for linear problems leads to a set of independent smaller systems that are currently hard to solve. We generalize the preconditioner to the nonlinear case and propose a new, fast inversion method for these smaller systems, avoiding disadvantages of the current options with adjusted boundary conditions in the subproblems.
\end{abstract}
\maketitle
\section{Introduction} \label{sec:intro}
    Due to the rise of parallel computers, in recent years a significant amount of work has gone into designing and analyzing parallel-in-time integration methods. We improve upon one of these methods, ParaOpt. It was proposed in \cite{ganderPARAOPTPararealAlgorithm2020c} as a parallel-in-time algorithm to solve the optimal-control problem of minimizing a \emph{final-value} objective function:
\begin{align}\label{optimalcontrolproblem2}
	\min_u \frac{1}{2}\| y(T) - y_{\mathrm{target}} \|^2 + \frac{\gamma}{2}\medint\int_0^T \|u(t)\|^2\mathrm dt.
\end{align}
Here, $u(t) \in \mathbb{R}^N$ is a control term and $y(t) \in \mathbb{R}^N$ is a state variable that evolves over $t \in [0, T]$ according to the nonlinear equation $y'(t) = g(y(t)) + u(t)$, with initial condition $y(0) = y_{\text{init}}$. The desired state at the final time $T$ is $y_{\mathrm{target}}$, while $\gamma$ is a regularization parameter. In \cite{bouillonDiagonalizationBasedPreconditionersGeneralized2024}, ParaOpt was adapted to optimize \emph{tracking} objectives:
\begin{align}\label{optimalcontrolproblem}
	\min_u \frac12\medint\int_0^T\|y(t) - y_\mathrm d(t)\|^2\mathrm dt + \frac\gamma2\medint\int_0^T\|u(t)\|^2\mathrm dt.
\end{align}
To simplify exposition, the following sections describe our work in the context of the final-value problem (\ref{optimalcontrolproblem2}). However, it can equally be applied to ParaOpt's tracking variant; numerical tests in \cref{sec:numExample} illustrate this.

The problem (\ref{optimalcontrolproblem2}) can be solved with the method of Langrage multipliers. First-order order optimality conditions then lead to a boundary value problem (BVP), derived in e.g. \cite{ganderConstrainedOptimizationLagrangian2014}, given by
\begin{subequations}\label{BVP2}
	\begin{align}
		y'(t) &= g(y(t)) - \lambda(t)/\gamma,  \hfill &y(0) &= y_{\mathrm{init}}, \\
		\lambda'(t) &= -g'(y(t))^*\lambda(t),  &\lambda(T) &= y(T) - y_{\mathrm{target}}.
	\end{align}	
\end{subequations}
The state variable $y$ and adjoint variable $\lambda$ must satisfy boundary conditions (BCs) at both ends of the time interval. 

Based on the popular Parareal algorithm \cite{lionsResolutionDEDPPar2001} for initial value problems (IVPs), ParaOpt is a multiple-shooting method that solves (\ref{BVP2}) by dividing the time interval into smaller subintervals. An inexact Newton iteration ensures continuity of the solution across subintervals. In each Newton step, the Jacobian is inverted iteratively, for which a preconditioner was proposed in \cite{bouillonDiagonalizationBasedPreconditionersGeneralized2024} in the linear case. Based on diagonalization, its inversion leads to a set of smaller, independent systems that can be solved in parallel. However, solving these systems currently requires either an extra inner iteration or a compromise on the black-box property of ParaOpt's BVP solvers. We propose a new, direct inversion method that avoids these disadvantages by using  adjusted boundary conditions in the subintervals.

We review ParaOpt in \cref{sec:ParaOptIntro}. The current preconditioner for linear equations is discussed in \cref{sec:LinearPrecon}, which we generalize to the nonlinear case in \cref{sec:NonLinearPrecon}. \Cref{sec:NewMethod} introduces a new inversion method for the inner systems. Finally, \cref{sec:numExample} illustrates the improvements made with the new method by means of a short numerical example.

\section{The ParaOpt algorithm} \label{sec:ParaOptIntro}
	In this section, we review the basic ParaOpt algorithm as described in \cite{ganderPARAOPTPararealAlgorithm2020c}. The method first divides the time domain $[0, T]$ into $L$ subintervals $[T_{\ell-1}, T_{\ell}]$ for $\ell=1,\hdots,L$, with $T_{\ell} - T_{\ell-1} = \Delta T$. The values of the state $y$ and the adjoint $\lambda$ on a grid point $T_\ell$ are denoted as $Y_\ell$ and $\Lambda_\ell$, respectively. Finally, two propagators $\mathcal{P}(Y_{\ell-1}, \Lambda_{\ell}, T_{\ell-1}, T_{\ell})$ and $\mathcal{Q}(Y_{\ell-1}, \Lambda_{\ell}, T_{\ell-1}, T_{\ell})$ are introduced. Together, they solve the BVP (\ref{BVP2}) locally in the subinterval $[T_{\ell-1}, T_{\ell}]$ with adapted boundary conditions $y(T_{\ell-1}) = Y_{\ell-1}$ and $ \lambda(T_\ell) = \Lambda_\ell$. The operator $\mathcal{P}$ approximates $y(T_\ell)$, while $\mathcal Q$ approximates $\lambda(T_{\ell-1})$. To keep notation light, the last two arguments of $\mathcal P$ and $\mathcal Q$ will be left out, as they are clear from context. 

To ensure that the concatenated solutions of the local subproblem solve (\ref{BVP2}), its BCs need to be met and the solution needs to be continuous across subintervals. This leads to a set of matching conditions that form the nonlinear system
\begin{align}
\label{ParaOptEquations}
f\left(\begin{bmatrix}
	\bm{Y} \\ \bm{\Lambda}
\end{bmatrix}\right) := 
\left[ 
\begin{array}{c}
	Y_1 - \mathcal{P}(y_{\text{init}}, \Lambda_1 ) \\
	Y_2 - \mathcal{P}(Y_1, \Lambda_2 ) \\
	\vdots \\
	Y_{{L}} - \mathcal{P}(Y_{{L}-1}, \Lambda_{{L}}) \\
	\hline
	\Lambda_1 - \mathcal{Q}(Y_1, \Lambda_2) \\
	\vdots \\
	\Lambda_{{L}-1} - \mathcal{Q}(Y_{{L}-1}, \Lambda_{{L}}) \\
	\Lambda_{{L}} - (Y_{{L}} - y_{\text{target}})
\end{array} \right] = 0.
\end{align}
This ParaOpt system is solved with Newton's method and initial guess $(\bm Y^0, \bm\Lambda^0)$. Evaluating the propagators' derivatives, denoted by $\mathcal P_y$, $\mathcal Q_y$, $\mathcal P_\lambda$, and $\mathcal Q_\lambda$, to compute the Jacobian $f'$ is expensive. Hence, the derivatives $\mathcal P^G_y$, $\mathcal Q^G_y$, $\mathcal P^G_\lambda$, and $\mathcal Q^G_\lambda$ of \emph{coarse} versions of $\mathcal{P}$ and $\mathcal{Q}$, denoted by $\mathcal P^G$ and $\mathcal Q^G$, are used to approximate $f'$. This results in the iteration
\begin{align} \label{ParaOptNewtonIteration}
\Tilde f'\left( \begin{bmatrix}
	\bm{Y}^{k-1} \\ \bm{\Lambda}^{k-1}
\end{bmatrix}\right) \begin{bmatrix}
\bm{Y}^{k} - \bm{Y}^{k-1} \\ \bm{\Lambda}^{k} - \bm{\Lambda}^{k-1}
\end{bmatrix} = f\left( \begin{bmatrix}
\bm{Y}^{k-1} \\ \bm{\Lambda}^{k-1}
\end{bmatrix}\right),
\end{align}
where $k$ is the iteration index and $\Tilde f'$ approximates $f'$ by using the coarse propagators. The ParaOpt algorithm then becomes an inexact Newton iteration, in which each step is solved iteratively with a GMRES solver.

To evaluate products with $\Tilde f'$, we need products with the derivatives of $\mathcal P^G$ and $\mathcal Q^G$. ParaOpt does this by coarsely solving parallel subproblems obtained by differentiating the BVP (\ref{BVP2}). For a small perturbation $\delta y$, we approximate $\mathcal{P}^G_y(Y_{\ell-1}^{k-1}, \Lambda_\ell^{k-1})\delta y$ and $\mathcal{Q}^G_y(Y_{\ell-1}^{k-1}, \Lambda_\ell^{k-1})\delta y$ by setting $z(T_{\ell-1}) = \delta y$ and $\mu(T_\ell) = 0$ and solving the linear BVP
\begin{subequations}\label{DerivativeSystem}
\begin{align} 
z'(t) &= g'(y(t))z(t) - \frac{\mu(t)}{\gamma}, \\
\mu'(t) &= - H(y(t), \lambda(t))^*z(t) - g'(y(t))^*\mu(t),
\end{align}
\end{subequations}
where $H(y(t), \lambda(t))^*$ is the Jacobian of $g'(y(t))^*\lambda(t)$. This gives the desired values $z(T_\ell) \approx \mathcal{P}^G_y(Y_{\ell-1}^{k-1}, \Lambda_\ell^{k-1})\delta y$ and $\mu(T_{\ell-1}) \approx \mathcal{Q}^G_y(Y_{\ell-1}^{k-1}, \Lambda_\ell^{k-1})\delta y$. In the same fashion, solving the derivative system with boundary values $z(T_{\ell-1}) = 0$ and $\mu(T_\ell) = \delta \lambda$ results in $z(T_\ell) \approx \mathcal{P}^G_{\lambda}(Y_{\ell-1}^{k-1}, \Lambda_\ell^{k-1})\delta \lambda$ and $\mu(T_{\ell-1}) \approx \mathcal{Q}^G_{\lambda}(Y_{\ell-1}^{k-1}, \Lambda_\ell^{k-1})\delta \lambda$. The variables $y(t)$ and $\mu(t)$ in (\ref{DerivativeSystem}) can be reused from solving (\ref{BVP2}) with $y(T_{\ell-1}) = Y_{\ell-1}$ and $\lambda(T_\ell) = \Lambda_\ell$ while evaluting the right-hand side of (\ref{ParaOptNewtonIteration}).

We note that these two BVPs can actually be combined into one. Since the BVP (\ref{DerivativeSystem}) is linear, solving a single BVP with boundary conditions $z(T_{\ell-1}) = \delta y$ and $\mu(T_\ell) = \delta \lambda$ leads to the summed values $z(T_\ell)\approx\mathcal{P}^G_y\delta y + \mathcal{P}^G_\lambda\delta \lambda$ and $\mu(T_{\ell-1})\approx\mathcal{Q}^G_y\delta y + \mathcal{Q}^G_\lambda\delta \lambda$. Only these sums are needed in a matrix-vector product with the Jacobian.

\section{ParaOpt preconditioners} \label{sec:Preconditioner}
	The basic ParaOpt method is very easily parallelizable, since the two most expensive operations -- computing the residual \eqref{ParaOptEquations} entry by entry or executing a matrix-vector multiplication with $\tilde{f}$ by solving $L$ instances of (\ref{DerivativeSystem}) -- are embarrassingly parallel over the subintervals. However, inverting $\tilde{f}$ iteratively could limit ParaOpt's scalability. If the number of subintervals $L$ and the number of used processors are increased proportionally, computation time could still increase if the number of GMRES iterations required to solve (\ref{ParaOptNewtonIteration}) increases as well. In \cite{bouillonDiagonalizationBasedPreconditionersGeneralized2024}, a parallelizable preconditioner based on diagonalization is introduced for linear ParaOpt. We review this preconditioner in \cref{sec:LinearPrecon} and generalize it to nonlinear equations in \cref{sec:NonLinearPrecon}.

	\subsection{Preconditioning linear ParaOpt}\label{sec:LinearPrecon}
		When the function $g$ is linear, i.e.\ $g(y(t)) = Ky(t)$ for a matrix $K$, typical propagator pairs become affine in their arguments. There then exist matrices 
$P^G_y$, $P^G_\lambda$, $Q^G_y$, and $Q^G_\lambda$, and vectors $b_\mathcal{P}$ and $b_\mathcal{Q}$, such that
\begin{equation} \label{AffineProps}
\mathcal{P}^G(Y_{\ell-1}, \Lambda_\ell) = P^G_y Y_{\ell-1} + P^G_\lambda\Lambda_{\ell} + b_\mathcal{P} \quad \text{and} \quad \mathcal{Q}^G(Y_{\ell-1}, \Lambda_\ell) = Q^G_y Y_{\ell-1} + Q^G_\lambda\Lambda_{\ell} + b_\mathcal{Q}.
\end{equation}
These matrices are independent of $\ell$, such that the Jacobian can be written compactly with the Kronecker product as 
\begin{align}\label{Jacobian_Compact}
\tilde{f}'\left( \begin{bmatrix}
	\bm{Y}^{k-1} \\ \bm{\Lambda}^{k-1}
\end{bmatrix} \right) =  \left[
\begin{array}{cc}
	I \otimes I - B \otimes P^G_y & -I \otimes P^G_\lambda \\
	-I \otimes Q^G_y +  E \otimes (Q^G_y - I) & I \otimes I - B^* \otimes Q^G_\lambda \\
\end{array} \right].	
\end{align}
Here, $B$ is a matrix with ones on its first sub-diagonal and $E$ has a one in the bottom-right corner. Disregarding $E$, this matrix only contains blocks that have a block-Toeplitz structure. Inspired by ParaDiag algorithms \cite{mcdonaldPreconditioningIterativeSolution2018b,ganderParaDiagParalleltimeAlgorithms2021,bouillonGeneralizedPreconditionersTimeParallel2024}, which build preconditioners by replacing block-Toeplitz structures with block-circulant ones, \cite{bouillonDiagonalizationBasedPreconditionersGeneralized2024} proposed a parallel preconditioner $P(\alpha)$ for $\tilde{f}'$ that modifies (\ref{Jacobian_Compact}) in two ways. First, the term $E \otimes (Q^G_y - I)$ is left out, as it destroys the block-Toeplitz structure of the bottom-left block. Second, the Toeplitz matrix $B$ is replaced by the alpha-circulant matrix $C(\alpha)$, which has an extra term $\alpha$ in the top-right corner. It is well-known that alpha-circulants can be diagonalized as
\begin{subequations}
\begin{align}
C(\alpha) = \Gamma_{\alpha}^{-1}\mathbb{F}^* D(\alpha) \mathbb{F}\Gamma_{\alpha},
\end{align}
\end{subequations}
in which $\Gamma_{\alpha}\coloneqq\mathrm{diag}(1, \alpha^{1/L}, \ldots, \alpha^{(L-1)/L})$, $\mathbb{F}$ is the discrete Fourier matrix $\{\mathrm e^{2\pi \mathrm ijk/L}/\sqrt{L}\}_{j,k=0}^{L-1}$, and $D(\alpha)\coloneqq\mathrm{diag}(\sqrt{N}\mathbb{F}c_1)$ with $c_1$ the first column of $C(\alpha)$. If we impose the restriction $|\alpha| = 1$, it holds that $\Gamma_{\alpha}^{-1} = \Gamma_{\alpha}^{*}$ such that $C(\alpha)$ and $C(\alpha)^*$ can be diagonalized simultaneously \cite{bouillonGeneralizedPreconditionersTimeParallel2024}. The following factorization of the preconditioner $P(\alpha)$ is then possible:
\begin{equation}\label{Precon_factorization}
	P(\alpha) = \left(\begin{bmatrix}
		\Gamma^{-1}_{\alpha}\mathbb{F}^*& \\
		& \Gamma^{-1}_{\alpha}\mathbb{F}^* \\
	\end{bmatrix} \otimes I \right)  \left[
\begin{array}{cccc|cccc}
	I \otimes I - D_1(\alpha) \otimes P^G_y & I \otimes -P^G_\lambda \\
	I \otimes - Q^G_y & I \otimes I - D_1^*(\alpha) \otimes Q^G_\lambda
\end{array} \right]
 \left( \begin{bmatrix}
\mathbb{F} \Gamma_{\alpha} & \\
& \mathbb{F}\Gamma_{\alpha}
\end{bmatrix} \otimes I \right).
\end{equation}
The application of $P(\alpha)^{-1}$ to a vector can be decomposed into three steps. The fast Fourier transform can invert the outer two terms in (\ref{Precon_factorization}) with only $\mathcal{O}(L\log L)$ operations. The middle term has four block-diagonal blocks. Inversion can thus be decomposed into $L$ smaller, inner systems that can be solved in parallel. They are of the form
\begin{align} \label{eq:linearinnersystems}
M\begin{bmatrix}
	\delta y \\
	\delta \lambda
\end{bmatrix} \coloneqq \left[
\begin{array}{cccc|cccc}
	I - d_\ell(\alpha) P^G_y & - P^G_\lambda \\
	-Q^G_y & I - d_\ell^*(\alpha) Q^G_\lambda
\end{array} \right] \begin{bmatrix}
	\delta y \\
	\delta \lambda
\end{bmatrix} = \begin{bmatrix}
	\delta p \\
	\delta q
\end{bmatrix},
\end{align}
for some vectors $\delta y$, $\delta\lambda$, $\delta p$, and $\delta q$, with $d_\ell(\alpha)$ the elements of the diagonal matrix $D(\alpha)$. In \cref{sec:NewMethod}, we return to the question of how to solve the inner systems (\ref{eq:linearinnersystems}) efficiently.

	\subsection{Generalization to the nonlinear case}\label{sec:NonLinearPrecon}
		In the nonlinear case, the derivatives of $\mathcal{P}^G$ and $\mathcal{Q}^G$ are not constant anymore, destroying any block-Toeplitz structure in the Jacobian. In nonlinear ParaDiag variants, the preconditioner's desired structure is enforced by using an averaging technique, where nonconstant blocks are replaced by their average \cite{ganderTimeParallelizationNonlinear2017b,liuFastBlock$alpha$Circulant2020,wuUniformSpectralAnalysis2022b}. The same idea can be applied here, except that the derivative blocks (which involve $\mathcal{P}^G_y$, $\mathcal{P}^G_\lambda$, $\mathcal{Q}^G_y$, and $\mathcal{Q}^G_\lambda$) can not be averaged explicitly. Instead, we average $Y_\ell$ and $\Lambda_\ell$ (similarly to, e.g., \cite{ganderParaDiagParalleltimeAlgorithms2021,wuParallelTimeAlgorithmHighOrder2021}); we define two values
\[
Y_\mathrm{av} = \frac{1}{L}\sum_{\ell=1}^{L} Y_\ell \qquad \text{and} \qquad \Lambda_\mathrm{av} = \frac{1}{L}\sum_{\ell=1}^{L} \Lambda_\ell
\]
and use these average state and adjoint in the preconditioner 
\begin{align} \label{ParaOptPrecon}
	P(\alpha) := \left[
	\begin{array}{cccc|cccc}
		I \otimes I - C_1(\alpha) \otimes  \mathcal{P}^G_y(Y_\mathrm{av}, \Lambda_\mathrm{av}) & -I \otimes \mathcal{P}^G_\lambda(Y_\mathrm{av}, \Lambda_\mathrm{av}) \\
		-I \otimes \mathcal{Q}^G_{y}(Y_\mathrm{av}, \Lambda_\mathrm{av}) & I \otimes I - C_1(\alpha)^* \otimes  \mathcal{Q}^G_{\lambda}(Y_\mathrm{av}, \Lambda_\mathrm{av})
	\end{array} \right].
\end{align}
If we set $P^G_y\coloneqq \mathcal P^G_y(Y_\mathrm{av}, \Lambda_\mathrm{av})$, and analogously for $P^G_\lambda$, $Q^G_y$, and $Q^G_\lambda$, inversion of $P(\alpha)$ again results in $L$ smaller systems (\ref{eq:linearinnersystems}). In \cref{sec:NewMethod}, we will discuss how inverting $M$ is done with derivative systems similar to (\ref{DerivativeSystem}). To have $y$ and $\lambda$ available there, we first perform an additional solve of the original system (\ref{BVP2}), with boundary values $y(T_{\ell-1}) = Y_\mathrm{av}$ and $\lambda(T_\ell) = \Lambda_\mathrm{av}$.

\section{Efficiently solving the smaller systems of the preconditioner} \label{sec:NewMethod}
	The paper \cite{bouillonDiagonalizationBasedPreconditionersGeneralized2024} proposes two ways to solve (\ref{eq:linearinnersystems}). A first option introduces a new, possibly expensive, inner GMRES iteration that maintains the black-box property of ParaOpt's propagators, as only products with their derivatives are needed. These can be computed with any solver that can solve the BVP (\ref{DerivativeSystem}). A second option, in some cases, is to use potential known properties of the propagators to avoid the extra inner loop. Propagators can then no longer be black-box. In this section, we propose a new method to invert the inner systems. It maintains the black-box property, is directly applicable to nonlinear equations and avoids introducing another iteration. Thus, it achieves the best of both worlds from the two current inversion methods proposed in \cite{bouillonDiagonalizationBasedPreconditionersGeneralized2024}.

Recall that we want to solve systems with a matrix $M$ of the form (\ref{eq:linearinnersystems}) in both the linear and the nonlinear case. Before treating $M$, we first consider solving systems with simpler matrices $M_0$ and $M_1$. The former simply contains the operators' derivatives:
\begin{align} \label{eq:PO_nonlinearinnersystems_simplified}
	M_0\begin{bmatrix}
		\delta y \\
		\delta \lambda
	\end{bmatrix} \coloneqq \left[
	\begin{array}{cccc|cccc}
		P^G_y & P^G_{\lambda} \\
		Q^G_{y} & Q^G_{\lambda} 
	\end{array} \right] \begin{bmatrix}
		\delta y \\
		\delta \lambda
	\end{bmatrix} = \begin{bmatrix}
		\delta p \\
		\delta q
	\end{bmatrix}.
\end{align}
Instead of solving a BVP to go from $(\delta y, \delta\lambda)$ to $(\delta p, \delta q)$, as we did in \cref{sec:ParaOptIntro}, we now have to go in the other direction. To do this, we change the BCs: we require $z(T_\ell) = \delta p$ and $\mu(T_{\ell-1}) = \delta q$. One then obtains the values $z(T_{\ell-1}) \approx \delta y$ and $\mu(T_{\ell}) \approx \delta \lambda$ by solving just one BVP.

Of course, this is not immediately useful. The matrix $M_0$ in (\ref{eq:PO_nonlinearinnersystems_simplified}) has several simplifications in comparison with the matrix $M$ in (\ref{eq:linearinnersystems}). We first reintroduce the factors $d_\ell(\alpha)$ to get the system
\begin{align}\label{eq:PO_nonlinearinnersystems_simplified_alpha}
		M_1\begin{bmatrix}
			\delta y \\
			\delta \lambda
		\end{bmatrix} \coloneqq
		\left[
		\begin{array}{cccc|cccc}
			d_\ell(\alpha)P^G_y & P^G_{\lambda} \\
			Q^G_{y} & d_\ell^*(\alpha)Q^G_{\lambda} 
		\end{array} \right] \begin{bmatrix}
			\delta y \\
			\delta \lambda
		\end{bmatrix} = \begin{bmatrix}
			\delta p \\
			\delta q
		\end{bmatrix}.
\end{align}
Due to the factors $d_\ell(\alpha)$, this system cannot be solved directly with a BVP with adjusted boundary conditions. However, by the following lemma, $d_\ell(\alpha)$ and $d_\ell^*(\alpha)$ can be factored out.
\begin{lemma}
	Consider the $\alpha$-circulant matrix $C(\alpha) \in \mathbb{C}^{L\times L}$ and its diagonalization $C(\alpha) = \Gamma^{-1}_{\alpha}\mathbb{F}^*D(\alpha)\mathbb{F}\Gamma_{\alpha}$. If the first column of $C(\alpha)$ only has one nonzero element $a$ and if $|\alpha| = 1$, then it follows that the elements $d_\ell(\alpha)$ of the diagonal matrix $D(\alpha)$ have the same magnitude as $a$, i.e.  $|d_\ell(\alpha)| = |a|$ for all $\ell$.
\end{lemma}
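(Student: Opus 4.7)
The plan is to exploit the fact that, under the hypothesis, $C(\alpha)$ is simply a scaled power of the elementary $\alpha$-shift matrix, so its eigenvalues can be written down directly.

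First, I would let $S \in \mathbb{C}^{L \times L}$ be the $\alpha$-circulant whose first column is $e_1 = (0, 1, 0, \ldots, 0)^\top$; that is, the cyclic down-shift in which the single wrap is multiplied by $\alpha$. A one-line induction on the column index shows $S^L = \alpha I$, so the $L$ eigenvalues of $S$ are the $L$-th roots of $\alpha$. Since $|\alpha| = 1$, each of these has magnitude $|\alpha|^{1/L} = 1$.

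Next, let $m \in \{0, 1, \ldots, L-1\}$ be the index of the unique nonzero entry in $c_1$, so that $c_1 = a\, e_m$. By the column-by-column construction of $\alpha$-circulants, $a\, S^m$ has first column $a\, e_m = c_1$, hence $C(\alpha) = a\, S^m$. Its eigenvalues are therefore $a$ times $m$-th powers of $L$-th roots of $\alpha$, each of magnitude $|a|$. Because the $d_\ell(\alpha)$ appear as the diagonal of the diagonalization $C(\alpha) = \Gamma_\alpha^{-1}\mathbb{F}^* D(\alpha)\mathbb{F}\Gamma_\alpha$, they are precisely these eigenvalues, and so $|d_\ell(\alpha)| = |a|$ for all $\ell$.

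I do not anticipate any real obstacle: the argument is essentially an identification of $C(\alpha)$ with a known simple matrix, and the only routine check is $S^L = \alpha I$. An alternative route that avoids the shift entirely is to use the explicit formula for the diagonalization: for $c_1 = a\, e_m$, the vector $\Gamma_\alpha c_1$ equals $a\alpha^{m/L} e_m$, whose scaled DFT has entries $a\alpha^{m/L}$ multiplied by $L$-th roots of unity, each of magnitude $|a|$. Either route yields the same conclusion in a few lines.
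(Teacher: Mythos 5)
Your proposal is correct, and your primary route is genuinely different from the paper's. The paper argues computationally: it uses the explicit formula $D(\alpha) = \mathrm{diag}(\sqrt{L}\,\mathbb{F}^{*}\Gamma_{\alpha}c_1)$, notes that $\Gamma_{\alpha}c_1 = a\,\alpha^{(j-1)/L}e_j$ for $c_1 = a e_j$, and reads off each $d_\ell(\alpha)$ as $a\,\alpha^{(j-1)/L}$ times an entry of the rescaled DFT matrix (a root of unity), so each has magnitude $|a|$. This is exactly the ``alternative route'' you sketch in your last paragraph. Your main argument instead identifies $C(\alpha) = a S^m$ with $S$ the elementary $\alpha$-shift, uses $S^L = \alpha I$ to conclude that the eigenvalues of $S$ are $L$-th roots of $\alpha$ (hence unimodular when $|\alpha|=1$), and then observes that the $d_\ell(\alpha)$ are the eigenvalues of $C(\alpha)$ because the stated diagonalization is a similarity transformation. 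This is structurally cleaner: it needs no explicit formula for $D(\alpha)$ and makes transparent \emph{why} the magnitudes are constant (the matrix is a scalar multiple of a power of an operator whose $L$-th power is a unimodular scalar). The paper's computation, on the other hand, additionally yields the explicit phases of the $d_\ell(\alpha)$, which is what the subsequent algorithm actually consumes when it sets $\theta = \angle d_\ell(\alpha)$. The only steps you should spell out are the two routine checks you flag: $S^L = \alpha I$ and that $S^m e_0 = e_m$ for $m \in \{0,\dots,L-1\}$, so that $aS^m$ is indeed the unique $\alpha$-circulant with first column $c_1$.
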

\begin{proof}
	The alpha-circulant matrix $C(\alpha)$ is defined by its first column $c_1$. With only one nonzero element $a$, let us denote it as $c_1 = a e_j$ for some unit vector $e_j$. The diagonal matrix $D(\alpha)$ is equal to $\mathrm{diag}(\sqrt{L}\mathbb{F}^{\ast}\Gamma_{\alpha}c_1)$. $\Gamma_{\alpha}$ is diagonal, with the elements $\alpha^{(\ell-1)/L}$ for $\ell=1,\hdots,L$. It follows that $\Gamma_{\alpha}c_1 = a \alpha^{(j-1)/L} e_j$. The matrix $\sqrt{L}\mathbb{F}^{\ast}$, on the other hand, is the rescaled DFT matrix $\{\mathrm e^{2\pi\mathrm ijk}\}_{j,k=0}^{{L}-1}$. It follows that
	\begin{align}
		d_\ell(\alpha) = a \alpha^{(j-1)/L}\mathrm e^{2\pi\mathrm i(j-1)(\ell-1)}.
	\end{align}
	 With the magnitude of $\alpha$ equal to one, the magnitude of $d_\ell$ is $|a|$.
\end{proof}
 In the case of ParaOpt, the relevant $C(\alpha)$ has one nonzero $a=1$ in its first subdiagonal, meaning that $|d_\ell(\alpha)| = 1$. This property allows us to write $d_\ell(\alpha) = \mathrm e^{i\theta}$ for some $\theta$, making the following factorization of $M_1$ possible:
 \begin{align}
	\left[
	\begin{array}{cccc|cccc}
		d_\ell(\alpha)P^G_y & P^G_{\lambda} \\
		Q^G_{y} & d_\ell^*(\alpha)Q^G_{\lambda} 
	\end{array} \right] = \begin{bmatrix}
		e^{i\theta/2}I & \\
		&  e^{-i\theta/2}I
	\end{bmatrix}\left[ \begin{array}{cccc|cccc}
		P^G_y & P^G_{\lambda} \\
		Q^G_{y} & Q^G_{\lambda} 
	\end{array} \right] \begin{bmatrix}
		e^{i\theta/2}I & \\
		& e^{-i\theta/2}I
	\end{bmatrix}.
\end{align} 
The middle term is $M_0$, which we can invert as above, while inversion of the two outer terms is trivial.

Finally, we reintroduce the minus signs and the identity matrices in the diagonal blocks of (\ref{eq:linearinnersystems}). The identities are scaled by the proposed factorization as follows:
\begin{equation}
	M = \left[
	\begin{array}{cc}
		I - d_\ell(\alpha)\mathcal{P}_y & -\mathcal{P}_{\lambda} \\
		-\mathcal{Q}_{y} & I - d_\ell^*(\alpha)\mathcal{Q}_{\lambda} 
	\end{array} \right] = -\begin{bmatrix}
		e^{i\theta/2}I & \\
		&  e^{-i\theta/2}I
	\end{bmatrix}\left[ \begin{array}{cccc|cccc}
		-d_\ell^*(\alpha)I + \mathcal{P}_y & \mathcal{P}_{\lambda} \\
		\mathcal{Q}_{y} & -d_\ell(\alpha)I + \mathcal{Q}_{\lambda} 
	\end{array} \right] \begin{bmatrix}
		e^{i\theta/2}I & \\
		&  e^{-i\theta/2}I
	\end{bmatrix}.
\end{equation}
Inversion of $M$ then requires us to solve a system of the form
\begin{align} \tilde M\begin{bmatrix}
	\delta y \\
	\delta \lambda
\end{bmatrix} \coloneqq \label{eq:PO_nonlinearinnersystems_simplified_alpha_Identity}
	\left[
	\begin{array}{cc}
		-d_\ell^*(\alpha)I + \mathcal{P}_y & \mathcal{P}_{\lambda} \\
		\mathcal{Q}_{y} & -d_\ell(\alpha)I +\mathcal{Q}_{\lambda} 
	\end{array} \right] \begin{bmatrix}
		\delta y \\
		\delta \lambda
	\end{bmatrix} = \begin{bmatrix}
		\delta p \\
		\delta q
	\end{bmatrix}.
\end{align}
Here, the factors $d_\ell^*(\alpha)$ scale the identities instead of propagator derivatives. Consequently, (\ref{eq:PO_nonlinearinnersystems_simplified_alpha_Identity}) modifies the boundary conditions used to invert $M_0$ in (\ref{eq:PO_nonlinearinnersystems_simplified}):
\begin{equation}\label{new_boundary}
	\underbrace{-d_\ell^*(\alpha)\delta y}_{-d_\ell^*(\alpha)z(T_{\ell-1})} + \underbrace{\mathcal{P}_y \delta y + \mathcal{P}_{\lambda} \delta \lambda}_{z(T_\ell)} = \delta p \qquad \text{and} \qquad \underbrace{\mathcal{Q}_y \delta y + \mathcal{Q}_{\lambda} \delta \lambda}_{\mu({T_{\ell-1}})}  \underbrace{- d_\ell(\alpha) \delta \lambda}_{- d_\ell(\alpha)\mu(T_\ell)} = \delta q.
\end{equation}
The system (\ref{eq:PO_nonlinearinnersystems_simplified_alpha_Identity}) thus naturally defines a set of boundary conditions, enabling an efficient inversion of $M$ by solving the corresponding (complex-valued) BVP. The full inversion procedure is given in algorithm \ref{alg:two}. It uses an abstract BVP solver, which maintains the black-box property of ParaOpt's propagators.
\begin{algorithm}
	\caption{Solve the system (\ref{eq:linearinnersystems})}\label{alg:two}
	\SetKwInOut{Input}{input}
	\SetKwInOut{Output}{output}
	
	\Input{Coarse BVP solver, $d_\ell(\alpha)$, $\delta p$, $\delta q$, and the solution $(y(t), \lambda(t))$ of (\ref{BVP2}) in $[T_{\ell-1}, T_\ell]$}
	\Output{Pair $(\delta y, \delta \lambda)$ such that (\ref{eq:linearinnersystems}) holds}
	
	1: Set $\theta\leftarrow\angle d_\ell(\alpha)$ such that $e^{i\theta} = d_\ell(\alpha)$; Set $c \leftarrow e^{i\theta/2}$, $a \leftarrow \delta p / c$, and $b \leftarrow c\delta q$\;
	2: Solve system (\ref{DerivativeSystem}) with boundary conditions $-d_\ell^*(\alpha)z(T_{\ell-1}) + z(T_\ell) = a$ and $\mu({T_{\ell-1}}) - d_\ell(\alpha)\mu(T_\ell) = b$ \; 
	3: Set $\delta y \leftarrow -z(T_{\ell-1}) / c$ and $\delta \lambda \leftarrow -c\mu(T_\ell)$\;
\end{algorithm}

\section{Numerical example} \label{sec:numExample}
	This section briefly illustrates the importance of being able to invert the inner systems efficiently, by means of a nonlinear PDE example. We consider optimal control, for both objectives (\ref{optimalcontrolproblem2}) and (\ref{optimalcontrolproblem}) of the viscous Burgers' equation
\begin{equation}\label{Burgers}
	\frac{\delta y(t, x)}{\delta t} = -\frac{1}{2}\frac{\delta y^2(t, x)}{\delta x} + \nu \frac{\delta^2 y(t, x)}{\delta x^2} + u(t,x),
\end{equation}
with initial value $y(0, x) = f(x)$ and boundary conditions $y(t, 0) = y(t, 1) = 0$. The problem is discretized in space with a pseudo-spectral method \cite{fornbergPracticalGuidePseudospectral1996} with $N=32$ discretization points. We follow \cite{sabehDistributedOptimalControl2016} and consider set-ups with smooth and nonsmooth initial conditions. The first has initial condition $f_1(x) = \sin(4\pi x)$, objectives $y_\mathrm{d}(t) = y_{\mathrm{target} }=0$, and regularization parameter $\gamma = 1$. The nonsmooth set-up has initial condition $f_2(x) = \mathbbm 1_{(0, 1/2]}(x)$, objectives $y_\mathrm{d}(t)= y_{\mathrm{target}} = f_2$, and regularization parameter $\gamma = 0.05$. For both set-ups, $T$ and $\alpha$ are set to $1$ and $\nu$ is set to $0.01$. We use $L=20$ subintervals in ParaOpt, with MATLAB's \texttt{bvp5c} solver as the fine BVP solver and 2-step implicit Euler as the coarse solver. Our code is at \url{https://gitlab.kuleuven.be/numa/public/paper-code-nonlinear-paraopt}.

The total number of \textit{outer} GMRES iterations as the quasi Newton iteration progresses is shown in \cref{fig:numExp:a} for both preconditioned and unpreconditioned tracking ParaOpt. The final value case is shown in \cref{fig:numExp2:a}. We can see that the general, nonlinear preconditioner for $\Tilde f'$ reduces this iteration count. However, this graph hides the true cost of the algorithm. A better measure in this case is the number of coarse, linear BVPs that is solved in one subinterval. When the basic, unpreconditioned ParaOpt algorithm is used, each GMRES iteration requires solving two BVPs in one interval. In contrast, with an extra iteration to invert the preconditioner's inner systems, one GMRES iteration may require many more coarse solves. For this basic example, using a preconditioner actually increases the total number of BVP solves, as shown in \cref{fig:numExp:b,fig:numExp2:b}. When derivatives are computed simultaneously and the newly proposed inversion method is used, the number of coarse solves is reduced (see \cref{fig:numExp:c,fig:numExp2:c}). The reduction matches the reduction in GMRES iterations. For every GMRES iteration, one BVP solve is needed to compute derivatives to $y$ and $\lambda$ simultaneously, and one additional BVP solve is needed in every interval to invert the preconditioner.

\begin{figure}[ht]
	\centering
	\begin{subfigure}[t]{.3\textwidth}
		\centering
		\includegraphics[width=\linewidth,height=3.85cm]{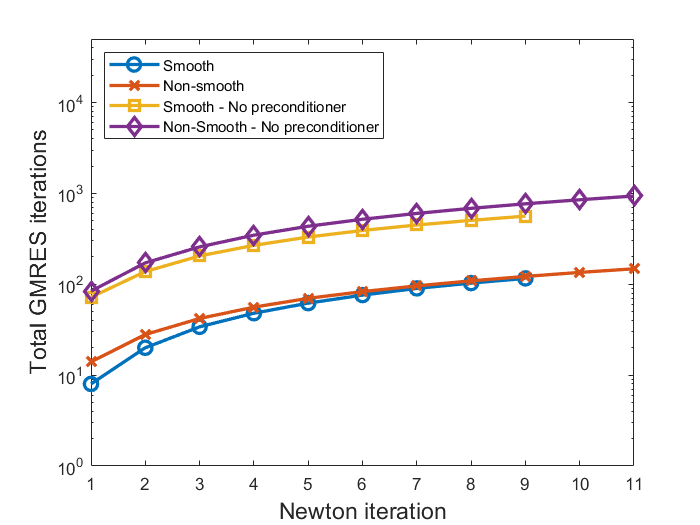}
		\caption{Number of GMRES iterations}
		\label{fig:numExp:a}
	\end{subfigure}%
	\begin{subfigure}[t]{.3\textwidth}
		\centering
		\includegraphics[width=\linewidth,height=3.85cm]{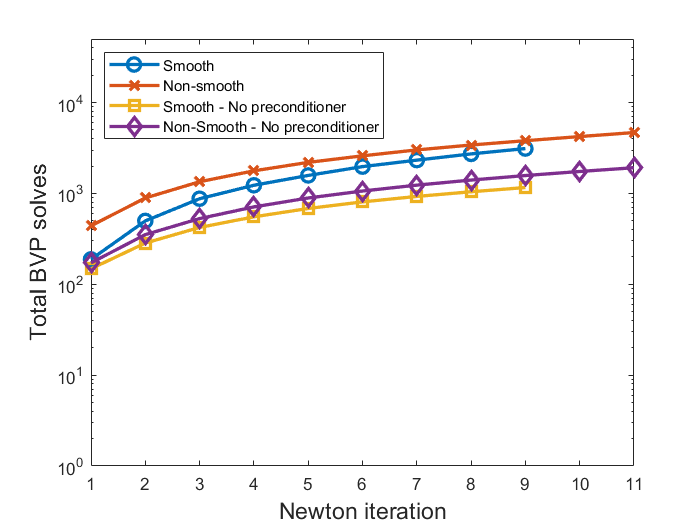}
		\caption{BVP solves with the current method}
		\label{fig:numExp:b}
	\end{subfigure}
	\begin{subfigure}[t]{.3\textwidth}
		\centering
		\includegraphics[width=\linewidth,height=3.85cm]{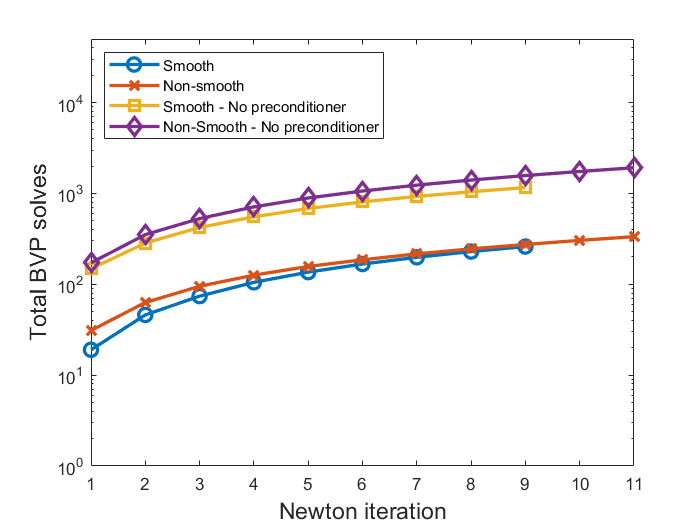}
		\caption{BVP solves with the new, improved method}
		\label{fig:numExp:c}
	\end{subfigure}
	\caption{Total GMRES iterations and coarse BVP solves in one subinterval in tracking ParaOpt (without and with the proposed improvements)}
	\label{fig:numExp}
\end{figure}

\begin{figure}[ht]
	\centering
	\begin{subfigure}[t]{.3\textwidth}
		\centering
		\includegraphics[width=\linewidth,height=3.85cm]{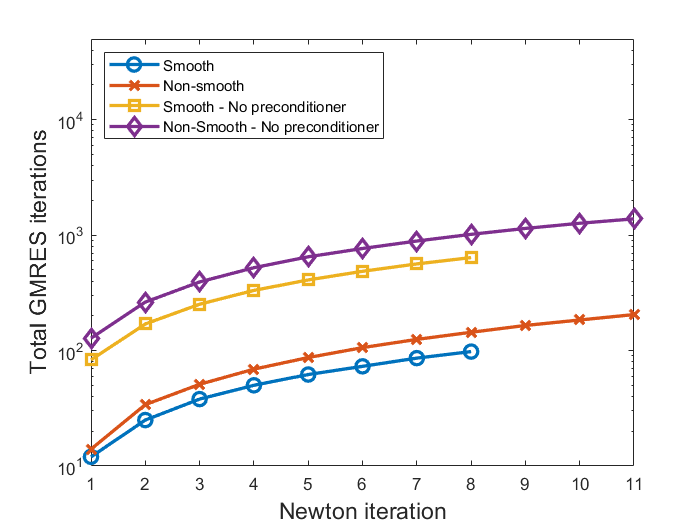}
		\caption{Number of GMRES iterations}
		\label{fig:numExp2:a}
	\end{subfigure}%
	\begin{subfigure}[t]{.3\textwidth}
		\centering
		\includegraphics[width=\linewidth,height=3.85cm]{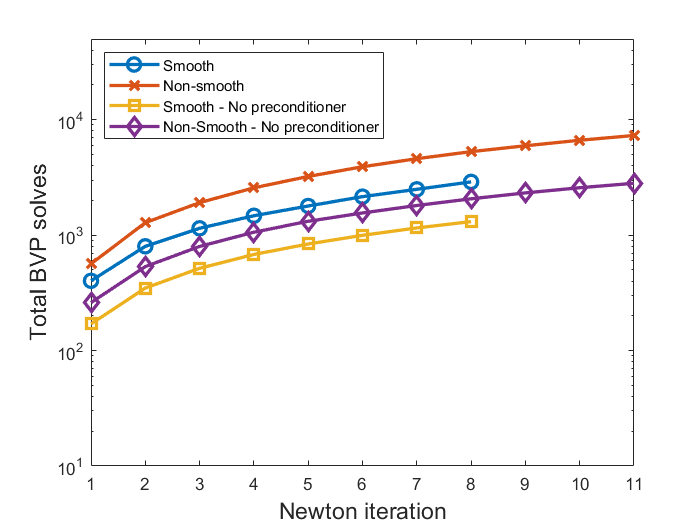}
		\caption{BVP solves with the current method}
		\label{fig:numExp2:b}
	\end{subfigure}
	\begin{subfigure}[t]{.3\textwidth}
		\centering
		\includegraphics[width=\linewidth,height=3.85cm]{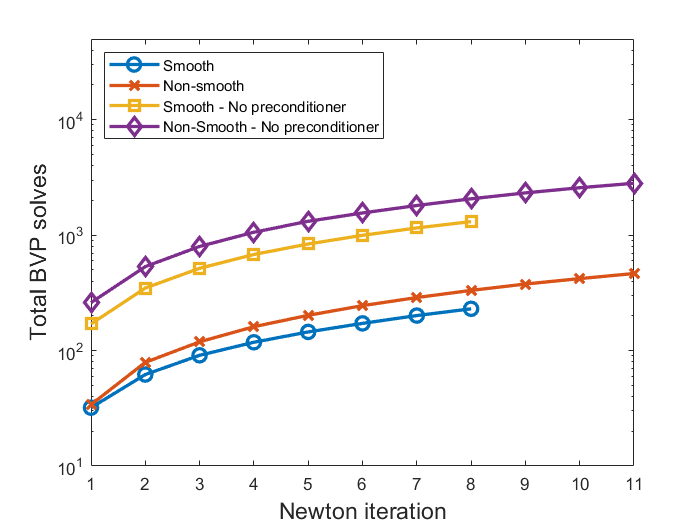}
		\caption{BVP solves with the new, improved method}
		\label{fig:numExp2:c}
	\end{subfigure}
	\caption{Total GMRES iterations and coarse BVP solves in one subinterval in final value ParaOpt (without and with the proposed improvements)}
	\label{fig:numExp2}
\end{figure}

\section*{Acknowledgments}
	We thank Ignace Bossuyt for his helpful comments. Our work was supported in part by the Fonds Wetenschappelĳk Onderzoek -- Vlaanderen (FWO) under grant 1169725N, by Inno4Scale Innovation Study Inno4scale-202301-092, and by the European High-Performance Computing Joint Undertaking (JU) under grant agreement\pagebreak{} No. 955701 (TIME-X). The JU receives support from the European Union's Horizon 2020 research and innovation programme and from Belgium, France, Germany, and Switzerland.

\bibliographystyle{elsarticle-num}
\bibliography{references}

\end{document}